\numberwithin{equation}{section}
\newtheorem{theorem}[equation]{Theorem}
\newtheorem{lemma}[equation]{Lemma}
\newtheorem{proposition}[equation]{Proposition}
\newtheorem{corollary}[equation]{Corollary}
\newtheorem{definition}[equation]{Definition}
\newcommand{\cat}[1]{\ensuremath{\mathbf{#1}}}
\newcommand{\op}{\ensuremath{{}^{\text{op}}}}
\newcommand{\id}[1][]{\ensuremath{\mathrm{id}_{#1}}}
\newcommand{\ie}{\text{i.e.}~}
\newcommand{\eg}{\text{e.g.}~}
\NewDocumentCommand{\xrightarrows}{ O{}O{} }{%
\mathrel{%
\vcenter{\hbox{%
\begin{tikzpicture}
  \node[minimum width=1cm,minimum height=1ex,anchor=south,align=center] (a){\text{\vphantom{hg}#1}\\[0.5ex] \vphantom{hg}#2};
  \draw[<-] ([yshift=0.35ex]a.west) -- ([yshift=0.35ex]a.east);
  \draw[->] ([yshift=-0.35ex]a.west) -- ([yshift=-0.35ex]a.east);
\end{tikzpicture}
}}%
}%
}
\title{Biproducts without pointedness}
\author{Martti Karvonen}
\begin{document}

\begin{abstract}
  We show how to define biproducts up to isomorphism in an arbitrary category without assuming any enrichment. The resulting notion coincides with the usual definitions whenever all binary biproducts exist or the category is suitably enriched, resulting in a modest yet strict generalization otherwise. We also characterize when a category has all binary biproducts in terms of an ambidextrous adjunction. Finally, we give some new examples of biproducts that our definition recognizes.
\end{abstract}

\maketitle

\section{Introduction}

Given two objects $A$ and $B$ living in some category \cat{C}, their biproduct -- according to a standard definition \cite{maclane} -- consists of an object $A\oplus B$ together with maps 

\begin{equation*} A\xrightarrows[$p_A$][$i_A$] A\oplus B\xrightarrows[$i_B$][$p_B$] B \end{equation*}
such that 

\begin{align*}
  p_Ai_A&=\id[A] \quad &p_Bi_B=\id[B] \\
  p_Bi_A&=0_{A,B} \quad &p_Ai_B=0_{B,A} \label{def:biprodswithZeros}
\end{align*}
and
\begin{equation*}\label{def:biprodswithsums}
  \id[A\oplus B]=i_Ap_A+i_Bp_B\text.
\end{equation*}

For us to be able to make sense of the equations, we must assume that \cat{C} is enriched in commutative monoids. One can get a slightly more general definition that only requires zero morphisms but no addition -- that is, enrichment in pointed sets -- by replacing the last equation with the condition that  $(A\oplus B,p_A,p_B)$ is a product of $A$ and $B$ and that $(A\oplus B,i_A,i_B)$ is their coproduct. We will call biproducts in the first sense \emph{additive biproducts} and in the second sense \emph{pointed biproducts} in order to contrast these definitions with our central object of study --  a pointless generalization of biproducts that can be applied in any category \cat{C}, with no assumptions concerning enrichment. This is achieved by replacing the equations referring to zero with the single equation
\begin{equation}\label{eq:projsCommute}
i_Ap_Ai_Bp_B=i_Bp_Bi_Ap_A\text,
\end{equation}
which states that the two canonical idempotents on $A\oplus B$ commute with one another. 

After surveying some basic properties of zero morphisms, we prove that biproducts thus defined behave as one would expect, \eg that they are defined up to unique isomorphism compatible with the biproduct structure, and that the notion agrees with the other definitions whenever \cat{C} is appropriately enriched.  We also show how to characterize them in terms of ambidextrous adjunctions. Both pointed biproducts and the pointless definition studied here add only a modest amount of generality to additive biproducts, for one can show that if \cat{C} has all binary biproducts, then \cat{C} is uniquely enriched in commutative monoids. However, when \cat{C} does not have all biproducts nor zero morphisms, some biproducts recognized by the pointless definition can exist, and we conclude with some examples of this.

Various generalizations of biproducts have been considered before. However, often one assumes a lot of structure from the categories in question, with the goal being a well-behaved notion of an infinite direct sum that is not required to be a product nor a coproduct. Examples of this include~\cite{fritz2019universal,ghezlimaroberts:wstarcategories,wyler1966direct}. In contrast to these, we develop a notion requiring no additional structure on our category while still retaining the universal properties. 

\section{Preliminaries on zeroes}\label{sec:zeroes}

\begin{definition} A morphism $a:A\to B$ is constant if $af=ag$ for all $f,g\colon C\to A$. Coconstant morphisms are  defined dually and a morphism is called a zero morphism if it is both constant and coconstant. A category has zero morphisms if for every pair of objects $A$ and $B$ there is a zero morphism $A\to B$.
\end{definition} 

We recall the definition of a partial zero structure on a category from~\cite{goswami2017structure}. 

\begin{definition} A partial zero structure on a category \cat{C} consists of a non-empty class of morphisms $\mathcal{Z}=\{z_{A,B}\colon A\to B\}$ indexed by some ordered pairs of objects of \cat{C}, subject to the following requirement: for every $z_{A,B}\in\mathcal{Z}$,$f\colon C\to A$ and $g\colon B\to D$, the class $\mathcal{Z}$ also contains a map $z_{C,D}\colon C\to D$ and it equals $g z_{A,B} f$.
\end{definition}

In general, neither zero morphisms between two objects nor partial zero structures on a category are unique -- for instance, in the category $\bullet\rightrightarrows \bullet$ both parallel maps are zero morphisms and form partial zero structures. However, if $A$ has a zero endomorphism, then for any $B$ there is at most one zero map $A\to B$. We list some basic properties of these below.

\begin{proposition}\label{prop:zeroes}
	\begin{enumerate}[(i)]
	\item If $A$ has a zero endomorphism, then for any $B$ there is at most one zero map $A\to B$. In particular, if $A\to B$ is zero and there exists a map $B\to A$, then the zero map $A\to B$ is unique.
	\item A morphism is a zero morphism iff it is part of a partial zero structure. 
	\item The union of partial zero structures $\mathcal{Z}_i$ is a partial zero structure provided that the partial zero structures agree on overlaps, \ie  $z^i_{A,B}=z^{j}_{A,B}$ whenever $z^i_{A,B}\in \mathcal{Z}_i$ and $z^j_i{A,B}$
	\item The class of all morphisms of the form $fzg$ where $z$ is a zero endomorphism forms a partial zero structure provided it is not empty.
	\item A category has zero morphisms iff it is enriched in pointed sets, in which case this enrichment is unique.
	\end{enumerate}
\end{proposition}

\begin{proof} For (i), assume that $f,g\colon A\rightrightarrows  B$ and $h\colon A\to A$ are zero. Then $f=f\id=fh=gh=g\id=g$. The claim after ``in particular'' follows from the fact that if $f\colon A\to B$ is zero and $g\colon B\to A$ is arbitrary, then $gf$ is a zero endomorphism on $A$. 

To prove (ii), assume first that $f$ is a zero morphism. Then morphisms of the form $gfh$ define a partial zero structure. Conversely, if $z_{A,B}$ is part of a zero structure, then $gz_{A,B}f=z_{C,D}=g'z_{A,B}f'$ for all $f,f\rightrightarrows C\to A$ and $g,g'\rightrightarrows B\to D$, showing that  $z_{A,B}$ is a zero morphism.  

(iii) follows straight from the definitions.

Finally, we consider (iv). Note that by (i) this collection picks at most one map $A\to B$ for any $A$ and $B$, whence the claim follows now from (ii) and (iii).

(v) is well-known but also follows readily from (i)-(iv). 
\end{proof}

\section{Main results}\label{sec:main}

We start with the new, enrichment-free definition of a biproduct.

\begin{definition}\label{def:new} A biproduct of $A$ and $B$ in \cat{C} is a tuple $(A\oplus B,p_A,p_B,i_A,i_B)$ such that $(A\oplus B,p_A,p_B)$ is a product of $A$ and $B$, $(A\oplus B,i_A,i_B)$ is their coproduct, and the following equations hold:
\begin{align*}
p_Ai_A&=\id[A] \\
p_Bi_B&=\id[B] \\
i_Ap_Ai_Bp_B&=i_Bp_Bi_Ap_A
\end{align*}
\end{definition}

In the context of Definition~\ref{def:new}, equation~\eqref{eq:projsCommute} could be replaced by alternative equivalent conditions. We list some of them below.

\begin{lemma}\label{lem:absorbing}
Assume that $(A\oplus B,p_A,p_B)$ is a product of $A$ and $B$, $(A\oplus B,i_A,i_B)$ is their coproduct, $p_Ai_A=\id[A]$ and $p_Bi_B=\id[B]$. Then the following are equivalent:
	\begin{enumerate}[(i)]
		\item $(A\oplus B,p_A,p_B,i_A,i_B)$ is the biproduct of $A$ and $B$, \ie the equation \[i_Ap_Ai_Bp_B=i_Bp_Bi_Ap_A\] holds as well
		\item The maps $p_Ai_B$ and $p_Bi_A$ are zero morphisms.
		\item The maps $p_Ai_B$ and $p_Bi_A$ are both constant. 
		\item The map $p_Ai_B$ is coconstant and $p_Bi_A$ is constant.
	\end{enumerate}
\end{lemma}

\begin{proof}
We begin by proving that (i) implies (ii). We first observe that $p_Bi_A$ is coconstant. This follows from the fact that the diagram 
  \[
  \begin{tikzpicture} 
     \matrix (m) [matrix of math nodes,row sep=3em,column sep=4em,minimum width=2em]
     {
     A  &A & A\oplus B & B & C \\
        &  &           &  A\oplus B  & B\\
      A\oplus B & B & A\oplus B &A & A\oplus B\\
       & A & A\oplus B & B\\};
     \path[->]
     (m-3-1) edge node [left] {$p_A$} (m-1-2)
            edge node [below] {$p_B$} (m-3-2)
            edge node [below] {$p_A$} (m-4-2)
     (m-4-2) edge node [below] {$i_A$} (m-4-3)
     (m-4-3) edge node [below] {$p_B$} (m-4-4)
     (m-4-4) edge node [below] {$i_B$} (m-3-5)
     (m-1-1) edge node [above] {$\id[A]$} (m-1-2)
            edge node [left] {$i_A$} (m-3-1)
     (m-3-2) edge node [below] {$i_B$} (m-3-3)
     (m-3-3) edge node [below] {$p_A$} (m-3-4)
     (m-3-4) edge node [below] {$i_A$} (m-3-5)
            edge node [right] {$i_A$} (m-2-4)
     (m-1-4) edge node [left] {$i_B$} (m-2-4)
     (m-2-4) edge node [below] {$h$} (m-1-5)
     (m-1-2) edge node [above] {$i_A$} (m-1-3)
     (m-1-3) edge node [above] {$p_B$} (m-1-4)
     (m-1-4) edge node [above] {$f$} (m-1-5)
     (m-3-5) edge node [right] {$p_B$} (m-2-5)
     (m-2-5) edge node [right] {$g$} (m-1-5); 
  \end{tikzpicture}
  \]
commutes, where $h$ is the cotuple $[g p_Bi_A,f]$. Replacing the roles of $A$ and $B$ proves that $p_Ai_B$ is coconstant as well. Since Definition~\ref{def:new} is self-dual, the maps $p_Ai_B$ and $p_Bi_A$ are also constant and hence zero, establishing that (i) implies (ii).

Condition (ii) clearly implies (iii) and (iv), so it suffices to show that either of them implies (i). Assuming (iii), it suffices to prove that both sides of equation~\eqref{eq:projsCommute} agree when postcomposed with the product projections, and by symmetry it suffices to postcompose only with $p_A$. Doing so to the left hand side results in $p_Ai_Bp_B$ whereas the right hand side yields $p_Ai_Bp_Bi_Ap_A$. These are equal as $p_Ai_B$ is constant. 

Finally, let us assume (iv) and postcompose both sides of equation~\eqref{eq:projsCommute} with the product projections. When postcomposing with $p_B$ we get $p_Bi_Ap_Ai_Bp_B$ and $p_Bi_Ap_A$ which are equal as $p_Bi_A$ is constant. When postcomposing with $p_A$ we are left to show that $p_Ai_Bp_B$ and  $p_Ai_Bp_Bi_Ap_A$ are equal, which we do by precomposing with the coproduct injections. When we precompose with $i_A$ both sides yield $p_Ai_Bp_Bi_A$, whereas precomposing with $i_B$ results in the maps $p_Ai_B$ and $p_Ai_Bp_Bi_Ap_Ai_B$, which are equal since $p_Ai_B$ is coconstant. 
\end{proof}

\begin{corollary}\label{thm:zeros} If \cat{C} has all binary biproducts, then it has zero morphisms.
\end{corollary}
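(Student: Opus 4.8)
The plan is to produce, for every pair of objects, an honest zero morphism between them, and then let Remark~\ref{remark:absorbing} do all the remaining work. Recall that the remark asserts that $\cat{C}$ has zero morphisms as soon as there is a zero morphism $A\to B$ for every pair $A,B$. Hence the absorption condition and the coherence of the whole family of zero morphisms need not be verified by hand once individual zero morphisms are shown to exist; it suffices to exhibit one zero morphism in each relevant hom-set.

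First I would fix arbitrary objects $A$ and $B$. Since $\cat{C}$ has all binary biproducts, I may choose a biproduct $(A\oplus B,p_A,p_B,i_A,i_B)$ of $A$ and $B$. Lemma~\ref{lem:absorbing} then tells me that the composite $p_Bi_A\colon A\to A\oplus B\to B$ is a zero morphism, that is, it is both constant and coconstant. This already exhibits a zero morphism $A\to B$, and since $A$ and $B$ were arbitrary, there is a zero morphism for every ordered pair of objects.

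Finally I would invoke Remark~\ref{remark:absorbing}: the existence of a zero morphism $A\to B$ for every $A,B$ is equivalent to $\cat{C}$ having zero morphisms, which is exactly the desired conclusion. In this sense there is no real obstacle left in the corollary itself, since all of the substance is packaged in Lemma~\ref{lem:absorbing}, which supplies the morphisms, and in Remark~\ref{remark:absorbing}, which upgrades pointwise existence to the global structure. The only point worth double-checking is pure bookkeeping: that $p_Bi_A$ genuinely has the asserted type $A\to B$, and that running the argument over \emph{every} ordered pair (in particular over both $(A,B)$ and $(B,A)$) is what literally matches the hypothesis of the remark.
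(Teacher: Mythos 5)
Your proof is correct and is exactly the paper's argument: the paper's own proof of this corollary is the one-liner ``Combine Lemma~\ref{lem:absorbing} and Remark~\ref{remark:absorbing},'' and you have simply unpacked that combination (Lemma~\ref{lem:absorbing} supplies a zero morphism $p_Bi_A\colon A\to B$ for each pair, and Remark~\ref{remark:absorbing} upgrades this pointwise existence to the category having zero morphisms). Nothing to add or fix.
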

\begin{proof} Combine Lemma~\ref{lem:absorbing} and Proposition~\ref{prop:zeroes}.
\end{proof}

Given Lemma~\ref{lem:absorbing}, it is easy to check that whenever \cat{C} has zero morphisms biproducts and pointed biproducts coincide. Any pointed biproduct is a biproduct in the sense of Definition~\ref{def:new}, since $i_Ap_Ai_Bp_B=0=i_Bp_Bi_Ap_A$. Conversely, let $(A\oplus B,p_A,p_B,i_A,i_B)$ be a biproduct in the sense of Definition~\ref{def:new} in a category with zero morphisms. Now by Lemma~\ref{lem:absorbing} and Proposition~\ref{prop:zeroes} we have $p_Bi_A=0_{A,B}$, as desired, and similarly $p_Ai_B=0_{B,A}$. If \cat{C} is enriched in commutative monoids, then biproducts coincide with additive biproducts just because pointed biproducts and additive biproducts coincide whenever \cat{C} is enriched in commutative monoids.

This shows that whenever all binary biproducts exist, the ordinary definitions suffice just fine. Moreover, every biproduct in the sense of Definition~\ref{def:new} is a pointed biproduct in a suitable subcategory. 

\begin{proposition}\label{prop:biprodsarebiprodsinapointedsubcat} Assume that an object $A$ admits a zero endomorphism. Consider the full subcategory $\cat{C}_0(A)$ of $\cat{C}$ consisting of those objects $B$ that admit a map to and from $A$. Then this subcategory has zero morphisms and any biproduct $A\oplus B$ in the sense of Definition~\ref{def:new} in \cat{C} is a pointed biproduct in $\cat{C}_0(A)$.
\end{proposition}

\begin{proof} As zero morphisms are closed under composition, the category $\cat{C}_0(A)$ has zero morphisms and hence is enriched in pointed sets by~\ref{prop:zeroes}. Moreover $A\oplus B$ is still a biproduct in the sense of Definition~\ref{def:new} as $\cat{C}_{0}(A)$ is full and contains $A,B$ and $A\oplus B$. Hence the discussion preceding this proposition shows that $A\oplus B$ is a pointed biproduct in $\cat{C}_0(A)$.
\end{proof}

Note that assuming a zero endomorphism on $A$ is not a genuine restriction, as it follows from the existence of any biproduct $A\oplus B$.  As pointed biproducts are well-known to be unique up to isomorphism, so are biproducts in the sense of Definition~\ref{def:new}.
 
\begin{corollary} The biproduct of $A$ and $B$, if it exists, is unique up to unique isomorphism compatible with the biproduct structure.
\end{corollary}

Using Proposition~\ref{prop:biprodsarebiprodsinapointedsubcat}, one can then proceed to check that biproducts in our sense work just like one would expect. For example, Definition~\ref{def:new} and other results of this section generalize from the binary case to the biproduct of an arbitrary-sized collection of objects, and one can easily show that if $A\oplus B$ and $(A\oplus B)\oplus C$ exist, then $(A\oplus B)\oplus C$ satisfies the axioms for the ternary biproduct of $A,B,C$. Similarly, one can show that for $f\colon A\to C$ and $g\colon B\to D$ we have $f+g=f\times g$ whenever the biproducts $A\oplus B$ and $C\oplus D$ exist. 

One might take Proposition~\ref{prop:biprodsarebiprodsinapointedsubcat} to mean that no generality is added. While the added generality is relatively modest indeed, some care needs to be taken as the converse of Proposition~\ref{prop:biprodsarebiprodsinapointedsubcat} does not hold in general, \ie a (pointed) biproduct in $\cat{C}_0(A)$ need not be a biproduct in \cat{C}. This is because the inclusion $\cat{C}_0(A)\to \cat{C}$ might fail to preserve products or coproducts. For a deliberately constructed example of this, consider the category $\cat{Vect_k}$ of vector spaces over a field $k$. First add a new initial object $A$ freely to this category, and then add a new morphism $f\colon A\to k\oplus k$ subject to the equation $gf=0$ whenever $g$ is not monic, resulting in a category \cat{C}. Now $0\colon k\oplus k\to k\oplus k$ is still a zero endomorphism and $k\oplus k$ is still a biproduct in $\cat{C}_0(k)\cong \cat{Vect_k}$, but no longer in \cat{C}: the pair $(0_{A,k},0_{A,k})$ factors via $k\oplus k$ both via $0\colon A\to k$ and via $f$.

Recall that an ambiadjoint to a functor $F\colon \cat{C}\to\cat{D}$ is a functor $D\colon \cat{D}\to\cat{C}$ that is simultaneously both left and right adjoint to $F$.

\begin{theorem} \cat{C} has biproducts iff the diagonal $\Delta\colon\cat{C}\to \cat{C}\times \cat C$ has an ambiadjoint $(-)\oplus (-)$ such that the unit $(i_A,i_B)\colon (A,B)\to (A\oplus B,A\oplus B)$ of the adjunction $(-)\oplus (-)\dashv \Delta$, is a section of the counit $(p_A,p_B)\colon (A\oplus B,A\oplus B)\to (A,B)$ of the adjunction $\Delta\dashv (-)\oplus (-)$, \ie $(p_A \circ i_A,p_B\circ i_B)=(\id[A],\id[B])$ for $A,B \in \cat{C}$.
\end{theorem}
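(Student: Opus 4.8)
The statement is a biconditional, so the plan is to treat the two implications separately, using throughout the standard fact that a right (resp.\ left) adjoint to $\Delta$ is exactly a choice of binary products (resp.\ coproducts), with the counit (resp.\ unit) given by the projections $(p_A,p_B)$ (resp.\ the coprojections $(i_A,i_B)$). Thus an ambiadjoint amounts to a single functor $(-)\oplus(-)$ whose object part sends $(A,B)$ to an object that is simultaneously a product and a coproduct, and whose morphism part is simultaneously the product-pairing and the coproduct-copairing functor. For the forward direction I would choose, for each pair $(A,B)$, a biproduct $(A\oplus B,p_A,p_B,i_A,i_B)$ and define $(-)\oplus(-)$ on objects accordingly. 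Its action on a morphism $(f,g)$ is forced by the product universal property to be $f\times g$ and by the coproduct universal property to be $f+g$; these coincide by the already-noted identity $f+g=f\times g$, so we obtain a single well-defined functor. It is right adjoint to $\Delta$ because each $A\oplus B$ is a product (counit $(p_A,p_B)$) and left adjoint because each is a coproduct (unit $(i_A,i_B)$), and the section condition $(p_Ai_A,p_Bi_B)=(\id[A],\id[B])$ is immediate from the biproduct axioms.

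The substance is the converse. Given such an ambiadjoint, each $A\oplus B$ is a product with projections $p_A,p_B$ and a coproduct with coprojections $i_A,i_B$, and the section condition supplies the axioms $p_Ai_A=\id[A]$ and $p_Bi_B=\id[B]$; what remains is the commuting-idempotents equation $i_Ap_Ai_Bp_B=i_Bp_Bi_Ap_A$. The key observation is that, since the \emph{single} functor $(-)\oplus(-)$ is simultaneously both adjoints, its morphism part $f\oplus g$ satisfies the product and the coproduct naturality equations at once: for $(f,g)\colon(A,B)\to(C,D)$,
\begin{equation*}
p_C(f\oplus g)=fp_A,\quad p_D(f\oplus g)=gp_B,\quad (f\oplus g)i_A=i_Cf,\quad (f\oplus g)i_B=i_Dg.
\end{equation*}

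I would use this to show that the off-diagonal composite $p_Bi_A\colon A\to B$ is a zero morphism. Applying $(-)\oplus(-)$ to $(f,\id[B])\colon(C,B)\to(A,B)$ and computing $p_B\circ(f\oplus\id[B])\circ i_C$ in the two orders allowed by the displayed relations gives $(p_Bi_A)f=p_Bi_C$, the right-hand side being the off-diagonal of $C\oplus B$ and so independent of $f$; this shows $p_Bi_A$ is constant. Dually, applying $(-)\oplus(-)$ to $(\id[A],g)\colon(A,B)\to(A,D)$ yields $g(p_Bi_A)=p_Di_A$, independent of $g$, so $p_Bi_A$ is coconstant. Hence $p_Bi_A$, and symmetrically $p_Ai_B$, is a zero morphism. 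Then $i_Ap_Ai_Bp_B=i_A(p_Ai_B)p_B$ and $i_Bp_Bi_Ap_A=i_B(p_Bi_A)p_A$ are each readily checked to be constant and coconstant, hence zero morphisms $A\oplus B\to A\oplus B$, and so equal by the uniqueness in Remark~\ref{remark:absorbing}. This is exactly the commuting-idempotents equation, completing the biproduct.

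I expect the main obstacle to lie in the converse, and within it the realization that ambiadjointness is not merely the coexistence of a product and a coproduct on the same object, but the genuinely stronger assertion that one functor serves as both adjoints, which is what yields the four naturality equations simultaneously; the two-way composite computation that extracts zero-ness of the off-diagonals from these equations is the technical heart, and everything after it is bookkeeping with constant and coconstant morphisms.
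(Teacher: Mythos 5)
Your proposal is correct and follows essentially the same route as the paper: the forward direction is handled via the noted identity $f+g=f\times g$, and the converse hinges on exactly the paper's key observation that the single ambiadjoint functor satisfies the naturality equations of both the unit and the counit simultaneously, which forces the off-diagonal composites $p_Bi_A$ and $p_Ai_B$ to be zero morphisms and hence yields the commuting-idempotents equation. The only cosmetic difference is that the paper gets constancy from coconstancy by a duality argument and then invokes the resulting zero-morphism structure, whereas you run both naturality computations explicitly and finish with uniqueness of zero morphisms; nothing of substance changes.
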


\begin{proof}
  The implication from left to right is routine. For the other direction, a right adjoint to the diagonal is well-known to fix binary products, and dually, a left adjoint fixes binary coproducts. Thus it remains to check that the required equations governing $p_A,p_B,i_A$ and $i_B$ are satisfied. By naturality, the diagram
  \[\begin{tikzpicture}[xscale=3.5,yscale=1.5]
    \node (tl) at (0,1) {$A$};
    \node (t) at (1,1) {$A \oplus B$};
    \node (tr) at (2,1) {$B$};
    \node (bl) at (0,0) {$A$};
    \node (b) at (1,0) {$A \oplus C$};
    \node (br) at (2,0) {$C$};
    \draw[->] (tl) to node[above] {$i_A$} (t);
    \draw[->] (t) to node[above] {$p_B$} (tr);
    \draw[->] (bl) to node[below] {$i_A$} (b);
    \draw[->] (b) to node[below] {$p_C$} (br);
    \draw[->] (tl) to node[left] {$\id[A]$} (bl);
    \draw[->] (tr) to node[right] {$f$} (br);
    \draw[->] (t) to node[right] {$\id[A]\oplus f$} (b);
  \end{tikzpicture}\]
  commutes for any $f$. Thus $p_BAi_A$ is coconstant and by duality it is constant, so it is zero. By symmetry this holds also for $p_Ai_B$. As $(p_A \circ i_A,p_B\circ i_B)=(\id[A],\id[B])$ by assumption, the result follows from Lemma~\ref{lem:absorbing}.
\end{proof}

\section{Examples}

Given the results of the previous section, genuinely new examples must be in categories that have neither all binary biproducts nor zero morphisms. One flavor of examples stems from objects that admit few maps in and out of them.

\begin{definition}
An object $A$ of a category is called \emph{subterminal} if for any object $B$ there is at most one morphism $B\to A$.
\end{definition}

\begin{proposition}\label{prop:subterminalgivesbiprods}
If an object $A$ is subterminal both in \cat{C} and in \cat{C}\op, then $(A,\id,\id,\id,\id)$ is the biproduct $A\oplus A$ in \cat{C}.
\end{proposition}

\begin{itemize}
  \item In \cat{Set} (or indeed any topos) the biproduct $\emptyset\oplus\emptyset$ exists and is the empty set.
  \item In any preorder $A\oplus B$ exists if and only if $A\cong B$.
  \item In the category of fields and ring homomorphisms Proposition~\ref{prop:subterminalgivesbiprods} tells us that $F\oplus F$ exists and is isomorphic to $F$ whenever $F$ is a prime field.
\end{itemize}

However, not all novel examples fall under Proposition~\ref{prop:subterminalgivesbiprods}.

\begin{itemize}
  \item Let \cat{C} be any category with biproducts, and let \cat{D} be any non-empty category. Then in the coproduct category $\cat{C}\sqcup \cat{D}$, the biproduct $A\oplus B$ exists whenever $A,B\in\cat{C}$. More concretely, in $\cat{Ab}\sqcup \cat{Set}$ the binary biproduct of any two abelian groups exists and is computed just as in \cat{Ab}, even though $\cat{Ab}\sqcup \cat{Set}$ lacks zero morphisms.
  \item A function $f\colon (X,d_X)\to (Y,d_Y)$ between metric spaces is non-expansive if $d_X(x,y)\geq d_Y(f(x),f(y))$ for all $x,y\in X$. It is contractive if there is some $c\in [0,1)$ such that $c d_X(x,y)\geq d_Y(f(x),f(y))$ for all $x,y\in X$. Let \cat{Met} be the category of metric spaces and non-expansive maps, and let \cat{Con} be the category of contractions. More specifically, let $\mathbb{N}$ denote the monoid of natural numbers. Then \cat{Con} is the full subcategory of $[\mathbb{N},\cat{Met}]$ with objects given by contractive endomorphisms. In \cat{Con}, the terminal object is $! \colon \{*\}\to \{*\}$, and for any $s$ in \cat{Con}, the biproduct $s\oplus !$ exists if and only if $s$ has a (necessarily unique) fixed point.
   \item Let \cat{C} be the category of commutative and cancellative semigroups, that is, of sets equipped with a binary operation $+$ that is associative, commutative, and furthermore satisfies the implication $x+y=x+z\Rightarrow y=z$. Given objects $A$ and $B$ of \cat{C}, the biproduct $A\oplus B$ exists iff either both $A$ and $B$ are empty, or if both $A$ and $B$ have a neutral element, in which case $A\oplus B$ can be constructed just as in \cat{Ab}.
\end{itemize}

\section*{Acknowledgements}

I wish to thank Chris Heunen, Tom Leinster and an anonymous referee for helpful comments. Most of the work was done under the support of Osk. Huttunen foundation, for which I am grateful. 

\bibliographystyle{acm}
\bibliography{biprods}

\end{document}